\newcommand{\C}{\mathbb{C}}
\newcommand{\Z}{\mathbb{Z}}
\newcommand{\fgl}{\mathfrak{gl}}
\newcommand{\fsp}{\mathfrak{sp}}
\newcommand{\Ad}{\mathrm{Ad}}
\newcommand{\End}{\mathrm{End}}
\newcommand{\Ind}{\mathrm{Ind}}
\newcommand{\Prim}{\mathrm{Prim}}
\newcommand{\nod}{{}_{\circ}^{\circ}}
\newcommand{\sgn}{\mathrm{sgn}}
\newcommand{\Tr}{\mathrm{Tr}}
\newcommand{\triv}{\mathrm{triv}}
\newcommand{\vacl}{\langle \mathrm{vac} \vert}
\newcommand{\vacr}{\vert \mathrm{vac}\rangle}
\newtheorem{thm}{Theorem}[section]
\newtheorem{prop}[thm]{Proposition}
\newtheorem{lemma}[thm]{Lemma}
\newtheorem{cor}[thm]{Corollary}
\newtheorem{rem}[thm]{Remark}
\newtheorem{ack}{Acknowledgment}
\newcommand{\fg}{\mathfrak{g}}
\newcommand{\fo}{\mathfrak{o}}
\newcommand{\fS}{\mathfrak{S}}
\newcommand{\an}{\mathrm{ann}}
\newcommand{\cre}{\mathrm{cr}}
\newcommand{\even}{\mathrm{even}}
\newcommand{\odd}{\mathrm{odd}}
\newcommand{\cA}{\mathcal{A}}
\newcommand{\cF}{\mathcal{F}}
\newcommand{\cM}{\mathcal{M}}
\newcommand{\cR}{\mathcal{R}}
\newcommand{\cT}{\mathcal{T}}
\newcommand{\cW}{\mathcal{W}}
\newcommand{\bD}{\mathbf{D}}
\begin{document}
\title{Homology of Lie algebras of \\Orthogonal and Symplectic generalized Jacobi matrices}
\author{A. Fialowski$^{a}$ and K. Iohara$^{b}$}

\maketitle

\begin{center}\begin{flushleft}
{\small{ {}$^b$  University of P\'ecs and E\"{o}tv\"{o}s Lor\'{a}nd University 
                       Budapest,  Hungary\\
  \qquad e-mail : fialowsk@ttk.pte.hu, fialowsk@cs.elte.hu \\
          {}$^a$ Universit\'{e} de Lyon, Universit\'{e} Lyon 1,
   CNRS UMR 5208, Institut Camille Jordan,
   43 Boulevard du 11 Novembre 1918,
   F-69622 Villeurbanne Cedex, France. \\
  \qquad e-mail : iohara@math.univ-lyon1.fr\; (corresponding author)\\
        }}
\end{flushleft}\end{center}

\begin{abstract}  In this note, we compute the homology with trivial coefficients of Lie algebras of generalized Jacobi matrices of type $B, C$ and $D$ over an associative unital $k$-algebra with $k$ being a field of characteristic $0$. 
\end{abstract}

{\small {\sl Keywords:}\,  Infinite dimensional Lie algebras, Lie algebra Homology, Invariant theory, Dihedral Homology   
   \;   {\it 2010 MSC:}\,  Primary 17B65 ; Secondary 17B55, 16E40.}


\section{Introduction}
In the first half of 1980's, a certain version of the infinite rank Lie algebra $\fgl(\infty)$ and some of their subalgebras have been extensively used to describe the soliton solutions of the Kadomtsev-Petviashvili (KP in short) like hierarchies (see, e.g., \cite{JM} for detail). Because of their importance, these algebras are still studied both in mathematics and physics. Nevertheless, their basic algebraic properties are not well-understood.

The first result about the homology with trivial coefficients of the Lie algebra $\fgl(\infty)$ over a field $k$ of characteristic $0$ appeared in the short note of B. Feigin and B. Tsygan \cite{FT} in 1983. Unfortunately, their short note was too dense and the proofs were not precise. Nevertheless, their results turned out to be correct as we showed with detailed proof in our previous work \cite{FI}. We also managed to generalize it to the coefficients over an associative unital $k$-algebra $R$. 

In this article, we obtain analogous descriptions of the trivial homology spaces
for orthogonal and symplectic subalgebras of $\fgl(\infty)$ over $R$. Such results seem not to be known even for the simplest case $R=k$.
For the Lie algebra $\fgl(\infty)$, the main tools were the Loday-Quillen \cite{LQ} and Tsygan \cite{T} theorem on the stable homology of $\fgl$ and cyclic homology computations. For the orthogonal and symplectic subalgebras, we need different tools: the Loday-Procesi theorem \cite{LP} on the stable homology of $\fo$ and $\fsp$ and dihedral homology \cite{L}. As in the case of $\fgl(\infty)$, in this work we also observe \textbf{delooping} phenomenon, that is, a degree shift of the homologies of $\fo$ (resp. $\fsp$) and orthogonal (resp. symplectic) subalgebras of $\fgl(\infty)$.

The first crucial step is to show that we can apply Loday-Procesi's result \cite{LP} on expressing the primitive part of the homology of our Lie algebras in terms of the skew-dihedral homologies of the $k$-algebra $J(R)$ of generalized Jacobi matrices. The second important step is to relate the skew-dihedral homology of $J(R)$ to the dihedral homology of $R$ via an explicit isomorphism between the Hochschild homologies of $J(R)$ and of $R$, given in \cite{FI}.

The paper is organized as follows. In Section 2, we recall the definition of the $k$-algebra $J(R)$ of generalized Jacobi matrices, the Lie algebras of type $A_J, B_J, C_J$ and $D_J$ and review Lie algebra homology and (skew-)dihedral homology. 
Denote the Lie algebra structure of $J(R)$ by $\fg J(R)$.
Section 3 is a key step for the results,  namely, we consider the restrictions of the Lie algebra isomorphisms $\fg J(R) \overset{\sim}{\rightarrow} \fgl_n(J(R))$ ($n \in \Z_{>1}$), considered by B. Feigin and B. Tsygan \cite{FT}, to the Lie subalgebras of type $B_J, C_J$ and $D_J$. As a consequence, the homology of these Lie algebras can be expressed as the stable limit of the corresponding finite dimensional classical simple Lie algebras which allows us to apply the Loday-Procesi theorem on the primitive part of our homology. Finally, in Section 4, we compute the homology of the Lie algebras $\fg$ over $R$ of type $B_J, C_J$ and $D_J$. As a corollary, we obtain the universal central extensions of our subalgebras.
The main result says that the primitive part of all these Lie algebras is the dihedral homology of $R$ shifted by $2$:  
\[ \Prim(H_\bullet(\fg(R)))=HD_{\bullet-2}(R). \]
In Appendices A and B we present some backgrounds from soliton theory for $R=\C$.
In Section A, we explain how the so-called Japanese cocycle was discovered. In 
Section B, we recall the Lie algebras of orthogonal and symplectic subalgebras of $\fg J(\C)$ with their central extensions which were given in the same way as for $\fg J(\C)$ explained in Section A. Our result confirms that the central extension obtained this way  is universal. 

\begin{ack}
The authors would like to thank Claudio Procesi for useful discussions and the referee for 
careful reading. 
\end{ack}

\section{Preliminaries}

\subsection{Generalized Jacobi Matrices}
Let $k$ be a field of characteristic $0$ and let $R$ be an associative unital $k$-algebra. 
As an $R$-module, $J(R)$ is spanned by matrices indexed over $\Z$:
\[ J(R)=\{ \,(m_{i,j})_{i,j \in \Z}\, \vert \,m_{i,j} \in R, \; \exists\,\, N \,\, \text{s.t.} \,\, m_{i,j}=0 \quad (\text{$\forall\, i,j$} \,\, \text{s.t.} \,\, \vert i-j\vert>N)\, \}.
\]
With the standard operations on matrices, $J(R)$ has a structure of associative algebra. The usual Lie bracket $[A,B]:=AB-BA$ on $J(R)$ is well-defined, and we shall denote it by $\fg J(R)$, whenever we regard it as a Lie algebra.   
Denote the matrix elements of $J(R)$ by $E_{i,j}$\, ($i,j \in \Z$). For $r \in R$, we set $E_{i,j}(r)=rE_{i,j}$.\\

Assume that $R$ is equipped with a $k$-linear anti-involution $\overline{\cdot}:R \rightarrow R$ and the transpose map ${}^t(\cdot):J(R) \rightarrow J(R)$ which are defined in \S \ref{sect_involution-I} and \S \ref{sect_involution-II}. We set $R_{\pm 1}=\{r \in R \vert \bar{r}=\pm r\}$. It is clear that $R=R_1 \oplus R_{-1}; r \mapsto \frac{1}{2}(r+\bar{r})+\frac{1}{2}(r-\bar{r})$. \\
Now we extend the definition of the \textbf{transpose}, also denoted by ${}^t(\cdot )$, to $J(R)$ by ${}^t(E_{i,j}(r))=E_{j,i}(\overline{r})$.

For $l \in \Z$, let $\tau_l, \tau_l^s$ be the $k$-linear anti-involutions of the Lie algebra $\fg J(R)$ defined by 
\begin{align*}
& \tau_l(X)=(-1)^lJ_l\cdot {}^t(X)\cdot J_l, \qquad J_l=\sum_{i \in \Z} (-1)^{i}E_{i,l-i}, \\
& \tau_l^s(X)=J_l^s\cdot {}^t(X)\cdot J_l^s, \qquad J_l^s=\sum_{i \in \Z} E_{i,l-i}. 
\end{align*}
See also Appendix \ref{sect_involution-I} for the anti-involutions $\tau_l$. 
\begin{rem} \label{rem_matrix-J}
The matrices $J_l$ and $J_l^s$ do not belong to $J(R)$, \\
but the multiplication of such matrices with any matrix from $M_\Z(R)=\{(m_{i,j})_{i,j \in \Z} \vert m_{i,j} \in R\}$ is well-defined, and the map $X \mapsto J_l X J_l$ is a well-defined $k$-endomorphism of $J(R)$.
\end{rem}
\subsection{Lie Algebras of type $A_J, B_J, C_J$ and $D_J$}
We say that the universal central extension of the Lie algebra $\fg J(R)$ is of type $A_J$. The Lie algebras of other types $B_J, C_J$ and $D_J$ are defined as their fix point subalgebras with respect to appropriate involutions. 
More precisely, 
the Lie algebras over $R$ of type $B_J, C_J$ and $D_J$ are universal central extensions of $\fo_J^{\odd}(R), \fsp_J(R)$ and $\fo_J^{\even}(R)$  
\begin{align*} 
\fo_J^{\odd}(R):=
&\fg J(R)^{\tau_0^s,-}=\{X \in \fg J(R) \vert \tau_0^s(X)=-X\}, \\
\fsp_J(R):=
&\fg J(R)^{\tau_{-1},-}=\{X \in \fg J(R) \vert \tau_{-1}(X)=-X\}, \\
\fo_J^{\even}(R):=
&\fg J(R)^{\tau_{-1}^s,-}=\{X \in \fg J(R) \vert \tau_{-1}^s(X)=-X\}.
\end{align*}

\begin{rem} In \cite{JM} ,
the universal central extensions of the Lie algebras $\fo_J^{\odd}(\C), \fsp_J(\C)$ and $\fo_J^{\even}(\C)$ are called of type $B_\infty, C_\infty$ and $D_\infty$, respectively,  where they used these Lie algebras to obtain Hirota bilinear forms with these symmetries. See Appendix \ref{sect_BCD-JM} for some explanation. 
\end{rem}
We avoid using the notations $A_\infty, B_\infty, C_\infty$ and $D_\infty$ since they are also used to indicate the stable limit of the corresponding finite dimensional simple Lie algebras. \\

Let us write explicitly the direct summands, as vector spaces, of the Lie algebras $\fo_J^{\odd}(R), \fsp_J(R)$ and $\fo_J^{\even}(R)$ for later use: \\
\vskip+0.1in

\noindent \fbox{$\fo_J^{\odd}(R)$} \hspace{0.2in}
$R_1(E_{r,s}-E_{-s,-r})\oplus R_{-1}(E_{r,s}+E_{-s,-r})$ \\
\phantom{ABCDEFGh} is a direct summand of $\fo_J^{\odd}(R)$ for any $r,s\in \Z$. \\

\vskip+0.15in

\noindent \fbox{$\fsp_J(R)$} \hspace{0.2in}
$R_1(E_{r,s}-(-1)^{r+s}E_{-s-1,-r-1})\oplus R_{-1}(E_{r,s}+(-1)^{r+s}E_{-s-1,-r-1})$ \\
\phantom{ABCDEFGh} is a direct summand of $\fsp_J(R)$ for any $r,s \in \Z$. \\

\vskip+0.15in

\noindent \fbox{$\fo_J^{\even}(R)$} \hspace{0.2in}
$R_1(E_{r,s}-E_{-s-1,-r-1})\oplus R_{-1}(E_{r,s}+E_{-s-1,-r-1})$ \\
\phantom{ABCDEFGh} is a direct summand of $\fo_J^{even}(R)$ for any $r,s \in \Z$. \\

\vskip+0.15in

\noindent Notice that one of the above components can be the zero subspace.

\subsection{Lie Algebra Homology}\label{sect_Lie-homology}
Our goal is to compute the homology of Lie algebras $\fo_J^{\odd}(R), \fsp_J(R)$ and $\fo_J^{\even}(R)$ with coefficients in the trivial module $k$. For the sake of brevity, let us denote these Lie algebras by $\fg$. The homology groups are the homology of the complex $(\bigwedge^{\bullet}\fg, d)$, called the \emph{Chevalley-Eilenberg complex}, where $\bigwedge^{\bullet}\fg$ is the exterior algebra and the differential $d$ is given by
\[
d(x_1\wedge \cdots \wedge x_n) 
:=
\sum_{1\leq i<j\leq n}(-1)^{i+j+1}[x_i,x_j]\wedge x_1 \wedge \cdots \wedge
\hat{x_i} \wedge \cdots \wedge \hat{x_j} \wedge \cdots \wedge x_n.
\]
The homology $H_\bullet(\fg)$ has a commutative and cocommutative DG-Hopf algebra structure (cf. \cite{Q}). Hence, the Quillen version of the Milnor-Moore theorem \cite{MM} states that it is the graded symmetric algebra over its primitive part. 
\subsection{(Skew-)Dihedral Homology}\label{sect_dihedral}
Consider the $n+1$ ($n \in \Z_{\geq 0}$) tensor product of an associative unital $k$-algebra $\cR$ with an anti-involution $\overline{\cdot}:\cR \rightarrow \cR$. The dihedral group $D_{n+1}:=\langle x,y \vert x^{n+1}=y^2=1,\, yxy=x^{-1}\rangle$
acts on $\cR^{\otimes n+1}$ by 
\begin{align*}
&x.(r_0\otimes r_1\otimes \cdots \otimes r_n)=(-1)^nr_n\otimes r_0\otimes r_1 \otimes \cdots \otimes r_{n-1}, \\
&y.(r_0\otimes r_1\otimes \cdots \otimes r_n)=(-1)^{\frac{1}{2}n(n+1)}\bar{r}_0\otimes \bar{r}_n \otimes \bar{r}_{n-1}\otimes \cdots \otimes \bar{r}_1.
\end{align*}
Let $\bD_n(\cR)$ denote the space of coinvariants $(\cR^{\otimes n+1})_{D_{n+1}}$. If we modify the action of $y$ by $-y$, the resulting coinvariants will be denoted by ${}_{-1}\bD_n(\cR)$.
The Hochschild boundary $b:\cR^{\otimes n+1} \rightarrow \cR^{\otimes n}$ is given by
\begin{align*}
b(r_0\otimes r_1 \otimes \cdots \otimes r_n)=
&\sum_{i=0}^{n-1}(-1)^{i}r_0 \otimes \cdots \otimes r_ir_{i+1} \otimes \cdots \otimes r_n 
\\
&+(-1)^n r_n r_0 \otimes r_1 \otimes \cdots \otimes r_{n-1}.
\end{align*}
It is compatible with passing to the quotient by the action of the dihedral group (cf. \cite{L}). Denote the obtained complices by $(\bD_\bullet(\cR),\bar{b})$
(resp. $({}_{-1}\bD_\bullet(\cR),\bar{b})$). Their homologies are called \emph{dihedral }(resp. \emph{skew-dihedral}) \emph{homology} of $\cR$:
\[ HD_n(\cR):=H_n(\bD_\bullet(\cR),\bar{b}), \qquad 
  (\; \text{resp}.\; {}_{-1}HD_n(\cR):=H_n({}_{-1}\bD_\bullet(\cR),\bar{b})\; ).
\]
\begin{rem} \label{rem_cyclic-dihedral}
It is known $($cf. \cite{L} $)$ that
\begin{align*}
HD_n(\cR)&=HC_n^+(\cR):=\{ m \in HC_n(\cR) \, \vert \, y.m=m \}, \\
{}_{-1}HD_n(\cR)&=HC_n^-(\cR):=\{ m \in HC_n(\cR) \, \vert \, y.m=-m \},
\end{align*} 
since our field $k$ contains $\frac{1}{2}$. Here, $HC_\bullet(\cR)$ is the cyclic homology. 
\end{rem}
\section{Feigin-Tsygan Type Isomorphisms}
First recall an isomorphism of Lie algebras $\fg J(R) \overset{\sim}{\rightarrow} \fgl_n(J(R))$
due to B. Feigin and B. Tsygan (Lemma 1.1 in \cite{FI}). 
Restricting such isomorphisms to $\fo_J^{\odd}(R), \fsp_J(R)$ and $\fo_J^{\even}(R)$, we obtain an interesting description of these algebras with an appropriate anti-involution of $J(R)$. 
\subsection{Isomorphisms between $\fg J(R)$ and $\fgl_n(J(R))$ ($n>1$)}
Fix an integer $n>1$. Let $I \subset \Z$ be the set of representatives of $\Z/\vert I\vert \Z$. Let $\Phi_I: \fg J(R) \rightarrow \fgl_{\vert I\vert}(J(R))$ defined by
\[ \Phi_I(E_{m+r\vert I\vert, n+s\vert I\vert})=e_{m,n}(E_{r,s}) \qquad m,n \in I \; \text{and}\; r,s \in \Z,
\]
where $e_{m,n}(E_{r,s})$ is the matrix whose only non-zero entry is $E_{r,s} \in J(R)$ at the $(m,n)$-entry.
It can be checked that the maps $\Phi_I$ are isomorphisms of Lie algebras.

Typically, we choose for $n \in \Z_{>0}$,  $I_{2n+1}:=\{i \in \Z \vert -n\leq i\leq n\}$ and $I_{2n}:=\{i \in \Z \vert -n \leq i<n\}$. \\

In the rest of this section, we analyze the restriction of these isomorphisms to the appropriate Lie subalgebras. 
\subsection{The Choice of Real Forms of $\fo_n(R)$ and $\fsp_{2n}(R)$}\label{sect_def-classic}
Here, inspired by \cite{LP}, we define $\fo_n(R)$ and $\fsp_{2n}(R)$ over an associative unital $k$-algebra $R$ with a $k$-linear anti-involution $\overline{\cdot}:R \rightarrow R$. From now on, we denote the matrix elements of finite size matrices by $e_{i,j}$ and set $e_{i,j}(r)=re_{i,j}$ for $r \in R$.\\
\vskip+0.1in

\noindent \fbox{$\fo_{2n+1}(R)$} \hspace{0.2in} This Lie algebra is the Lie subalgebra of \\ \phantom{ABCDEFGHI}  $\fgl_{I_{2n+1}}(R)= \bigoplus_{-n\leq i,j\leq n} R e_{i,j}$ 
defined by
\[ \fo_{2n+1}(R)=\{\,X \in \fgl_{I_{2n+1}}(R)\, \vert \, {}^t(X)J_n^{B}+J_n^{B}X=0\}, \]
where $J_n^{B}=\sum_{i=-n}^n e_{i,-i}$. Note that $(J_n^{B})^2=I=\sum_{i=-n}^n e_{i,i}$. 

\vskip+0.15in

\noindent \fbox{$\fsp_{2n}(R)$} \hspace{0.2in} This Lie algebra is the Lie subalgebra of \\ \phantom{ABCDEFGh}  $\fgl_{I_{2n}}(R)=\bigoplus_{-n\leq i,j< n} R e_{i,j}$ 
defined by
\[ \fsp_{2n}(R)=\{\,X \in \fgl_{I_{2n}}(R)\, \vert \, {}^t(X)J_n^{C}+J_n^{C}X=0\}, \]
where $J_n^{C}=\sum_{i=-n}^{n-1} (-1)^{i}e_{i,-i-1}$. Note that $(J_n^{C})^2=-I=-\sum_{i=-n}^{n-1} e_{i,i}$.\\

\vskip+0.15in

\noindent \fbox{$\fo_{2n}(R)$} \hspace{0.2in} This Lie algebra is the Lie subalgebra of \\
\phantom{ABCDEFG} $\fgl_{I_{2n}}(R)=\bigoplus_{-n\leq i,j<n} R e_{r,s}$ defined by
\[ \fo_{2n}(R)=\{\,X \in \fgl_{I_{2n}}(R)\, \vert \, {}^t(X)J_{n}^{D}+J_{n}^{D}X=0\}, \]
where $J_{n}^{D}=\sum_{i=-n}^{n-1}e_{i,-i-1}$. Note that $(J_{n}^{D})^2=I=\sum_{i=-n}^{n-1} e_{i,i}$. \\

\vskip+0.15in

Notice that the natural inclusions $I_{2n+1} \hookrightarrow I_{2n+3}$ and $I_{2n} \hookrightarrow I_{2n+2}$ induce natural inclusions $\fo_{2n+1}(R) \hookrightarrow \fo_{2n+3}(R)$, (resp. $\fo_{2n}(R) \hookrightarrow \fo_{2n+2}(R)$ and $\fsp_{2n}(R) \hookrightarrow \fsp_{2n+2}(R)$) which allows us to define their stable limits $\fo_{\odd}(R)$ (resp. $\fo_{\even}(R)$ and $\fsp(R)$). 
\begin{rem} Since there are natural inclusions $\fo_{2n} \hookrightarrow \fo_{2n+1}$ and $\fo_{2n+1} \hookrightarrow \fo_{2n+2}$, the two inductive limits $\fo_{\odd}(R)$ and $\fo_{\even}(R)$ are in fact the same. Nevertheless, for the clarity, we will keep these separate notations.
\end{rem}

Let $\fgl(R)$ be the Lie algebra of the matrices $M=(m_{i,j})$ with coefficients in $R$ and indexed over $\Z$, such that the set $\{(i,j) \in \Z^2\, \vert \, m_{i,j}\neq 0\, \}$ is finite. 
\begin{rem} The Lie algebras $\fo_{\odd}(R), \fsp(R)$ and $\fo_{\even}(R)$ can be defined as subalgebras of $\fgl(R)$ as follows.  
Set 
\[ J_B=\sum_{i \in \Z} e_{i,-i}, \qquad J_C=\sum_{i \in \Z}(-1)^{i}e_{i,-i-1}, \qquad
J_D=\sum_{i \in \Z}e_{i,-i-1}. \]
Define the anti-involutions of $\fgl(R)$ by
\[ \tau_B(X)=J_B{}^t(X)J_B, \qquad \tau_C(X)=-J_C{}^t(X)J_C, \qquad \tau_D=J_D{}^t(X)J_D. \]
$($cf. See Remark \ref{rem_matrix-J}.$)$
It is clear that 
\[ \fo_{\odd}(R)\cong \fgl(R)^{\tau_B,-}, \qquad 
   \fsp(R)\cong \fgl(R)^{\tau_C,-}, \qquad
   \fo_{\even}(R)\cong \fgl(R)^{\tau_D,-}.
\]
\end{rem}
\subsection{Restrictions of $\Phi_I$}
Let $\ast$ be the anti-involution on $J(R)$ satisfying
\[ E_{k,l}(r)^{\ast}=E_{-l,-k}(\bar{r}) \qquad r \in R. \]
\begin{prop}\label{prop_restriction}
Let $n \in \Z_{>1}$. The following restrictions are isomorphisms:
\begin{enumerate}
\item 
$\Phi_{I_{2n+1}}:\fo_J^{\odd}(R) \overset{\sim}{\longrightarrow} (\fo_{2n+1}(J(R)), \ast)$
\item 
$\Phi_{I_{2n}}:\fsp_J(R) \overset{\sim}{\longrightarrow}  (\fsp_{2n}(J(R)), \ast),$
\item 
$\Phi_{I_{2n}}:\fo_J^{\even}(R) \overset{\sim}{\longrightarrow} (\fo_{2n}(J(R)), \ast).$
\end{enumerate}
\end{prop}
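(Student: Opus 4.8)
The plan is to exploit that $\Phi_I$ is already known to be a Lie algebra isomorphism $\fg J(R)\xrightarrow{\sim}\fgl_{\vert I\vert}(J(R))$, so the entire content reduces to checking that, in each of the three cases, $\Phi_I$ \emph{intertwines} the $k$-linear anti-involution that cuts out the source subalgebra with the $\ast$-anti-involution that cuts out the target classical subalgebra. Once an intertwining relation $\Phi_I\circ\tau=\sigma\circ\Phi_I$ is in hand, $\Phi_I$ carries the $(-1)$-eigenspace of $\tau$ bijectively onto the $(-1)$-eigenspace of $\sigma$; since $\Phi_I$ is a Lie algebra isomorphism this restriction is automatically an isomorphism of Lie algebras, and surjectivity onto the classical algebra needs no separate argument.

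First I would compute the three source anti-involutions on a matrix unit $E_{p,q}(\rho)$. Using ${}^t(E_{i,j}(r))=E_{j,i}(\bar r)$ together with the definitions of $J_0^s,J_{-1},J_{-1}^s$, a direct multiplication yields
\begin{align*}
\tau_0^s(E_{p,q}(\rho))&=E_{-q,-p}(\bar\rho),\\
\tau_{-1}(E_{p,q}(\rho))&=(-1)^{p+q}E_{-q-1,-p-1}(\bar\rho),\\
\tau_{-1}^s(E_{p,q}(\rho))&=E_{-q-1,-p-1}(\bar\rho).
\end{align*}
These formulas are consistent with the direct-summand descriptions of $\fo_J^{\odd}(R),\fsp_J(R),\fo_J^{\even}(R)$ recorded earlier.

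Next I would transport both sides through $\Phi_I$. Writing the indices as $p=m+r\vert I\vert$ and $q=n'+s\vert I\vert$ with $m,n'\in I$, one has $\Phi_I(E_{p,q}(\rho))=e_{m,n'}(\rho E_{r,s})$, and the $\ast$-transpose sends this to $e_{n',m}(\bar\rho E_{-s,-r})$ precisely because $(\rho E_{r,s})^{\ast}=\bar\rho E_{-s,-r}$ by the definition of $\ast$. Applying the target anti-involution $X\mapsto J_n^{B}\,{}^t(X)J_n^{B}$ (resp. $-J_n^{C}\,{}^t(X)J_n^{C}$ and $J_n^{D}\,{}^t(X)J_n^{D}$), which cuts out $\fo_{2n+1}$ (resp. $\fsp_{2n}$, $\fo_{2n}$) as its $(-1)$-eigenspace, and carrying out the products gives $e_{-n',-m}(\bar\rho E_{-s,-r})$ (resp. $(-1)^{m+n'}e_{-n'-1,-m-1}(\bar\rho E_{-s,-r})$ and $e_{-n'-1,-m-1}(\bar\rho E_{-s,-r})$). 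On the other hand, applying $\Phi_I$ directly to the formulas above, and using that $-p,-q$ for $I_{2n+1}$ (resp. $-p-1,-q-1$ for $I_{2n}$) again have representatives $-m,-n'$ (resp. $-m-1,-n'-1$) with multiples $-r,-s$, reproduces exactly the same elements; the sign $(-1)^{p+q}$ collapses to $(-1)^{m+n'}$ since $\vert I\vert$ is even in the symplectic and even-orthogonal cases. This verifies the three intertwining relations on matrix units, and they extend to all of $\fg J(R)$ by $k$-linearity.

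The main obstacle is the index bookkeeping of the middle step. One must check that the reflections $p\mapsto -p$ and $p\mapsto -p-1$ preserve the chosen representatives $I_{2n+1}$ and $I_{2n}$, that the bandwidth condition makes all the products with $J_0^s,J_{-1},J_{-1}^s$ and with $J_n^{B},J_n^{C},J_n^{D}$ well-defined (cf. Remark \ref{rem_matrix-J}), and --- most delicately --- that the factor $(-1)^i$ occurring in $J_{-1}$ and in $J_n^{C}$ reduces to a sign depending only on the representative $m$, so that it matches the $(-1)^{p+q}$ produced by $\tau_{-1}$. The two orthogonal cases carry no signs and are comparatively routine; the symplectic case is where these signs must be tracked with care.
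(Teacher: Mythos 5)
Your proposal is correct and takes essentially the same route as the paper: the paper's proof is exactly your intertwining computation evaluated on spanning elements, checking that $\Phi_I$ sends the antisymmetrized matrix units $E_{p,q}(r)-\tau(E_{p,q}(r))$ of the source onto the correspondingly $\ast$-antisymmetrized elements $e_{i,j}(E_{k,l}(r))-\sigma(e_{i,j}(E_{k,l}(r)))$ of the target (written out only for $\fo_J^{\odd}(R)$, the other cases being declared similar). The only difference is presentational: you package the computation as the relation $\Phi_I\circ\tau=\sigma\circ\Phi_I$ together with the identification of both subalgebras as $(-1)$-eigenspaces, and you track the sign bookkeeping in the symplectic case explicitly, which the paper leaves to the reader.
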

\begin{proof} Since the proofs are similar, we just show the first case. 
Let $r \in R$. For $i,j \in I_{2n+1}$ and $k,l \in \Z$, we have 
\begin{align*}
&\Phi(E_{i+(2n+1)k,j+(2n+1)l}(r)-E_{-j-(2n+1)l,-i-(2n+1)k}(\bar{r})) \\
=
&(e_{i,j}(E_{k,l}(r))-e_{-j,-i}(E_{-l,-k}(\bar{r})). 
\end{align*} 
\end{proof}
In particular, Proposition \ref{prop_restriction} implies
\begin{cor} 
\begin{enumerate}\label{cor-stable-lim}
\item $H_\bullet(\fo_J^{\odd}(R)) \cong H_\bullet(\fo_{\odd}(J(R)))$,
\item $H_\bullet(\fsp_J(R)) \cong H_\bullet(\fsp(J(R)))$,
\item$H_\bullet(\fo_J^{\even}(R)) \cong H_\bullet(\fo_{\even}(J(R)))$,
\end{enumerate}
\end{cor}

\section{Homology of $\fo_J^{\odd}(R), \,\fsp_J(R)$ and $\fo_J^{\even}(R)$}
Let $k$ be a field of characteristic $0$ and $R$ an associative unital $k$-algebra. 
In this section, we determine the homology of the Lie algebras $\fo_J^{\odd}(R), \fsp_J(R)$ and $\fo_J^{\even}(R)$. As a corollary, we obtain an explicit realization 
of their universal central extension. 

\subsection{Primitive Part of the Homology}
For a Hopf $k$-algebra $H$, we denote its primitive part by $\Prim(H)$. 
Thanks to Corollary \ref{cor-stable-lim},
we can directly apply Theorem 5.5 of \cite{LP}
and we obtain the next result:
\begin{thm}\label{thm_main0} Let $\ast:J(R) \rightarrow J(R)$ be the anti-involution satisfying
\[ E_{k,l}(r)^{\ast}=E_{-l,-k}(\bar{r}), \qquad r \in R. \]
Then for $\fg=\fo_J^{\odd}, \,\fsp_J$ and $\fo_J^{\even}$, we have
\[
\Prim(H_\bullet(\fg(R)))
=
{}_{-1}HD_{\bullet-1}(J(R)).
\]
\end{thm}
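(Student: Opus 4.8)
The plan is to combine the stable-limit identification from Corollary \ref{cor-stable-lim} with the Loday--Procesi theorem on the stable homology of the orthogonal and symplectic Lie algebras. Since by Corollary \ref{cor-stable-lim} we have $H_\bullet(\fg(R)) \cong H_\bullet(\fo_{\odd}(J(R)))$ (resp. $\fsp(J(R))$, resp. $\fo_{\even}(J(R)))$, the computation reduces to understanding the stable homology of the classical Lie algebras built over the associative algebra $J(R)$, equipped with the anti-involution $\ast$ described in the statement. The first step I would carry out is therefore to verify that the anti-involution $\ast$ on $J(R)$ given by $E_{k,l}(r)^\ast = E_{-l,-k}(\bar r)$ is exactly the anti-involution implicitly used when we realize $\fo_{\odd}(J(R))$, $\fsp(J(R))$ and $\fo_{\even}(J(R))$ as fixed-point subalgebras under $\tau_B,\tau_C,\tau_D$; this is precisely the content that Proposition \ref{prop_restriction} was set up to provide, since the transpose there intertwines with $\ast$ on the entries.

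Second, I would invoke Theorem 5.5 of \cite{LP}. That theorem expresses the primitive part of the stable homology of $\fo$ and $\fsp$ over an associative algebra with anti-involution in terms of the skew-dihedral homology of that algebra, shifted appropriately in degree. Applied to the algebra $\cR = J(R)$ with the anti-involution $\ast$, it should read
\[
\Prim(H_\bullet(\fo_{\odd}(J(R)))) = {}_{-1}HD_{\bullet-1}(J(R)),
\]
and identically for the $\fsp$ and $\fo_{\even}$ cases. The key point making all three types yield the same answer is the Loday--Procesi phenomenon that the orthogonal and symplectic series, despite differing by a sign in their defining form, produce the \emph{same} skew-dihedral contribution at the level of primitives; the sign discrepancy is absorbed by passing to the $-1$-twisted coinvariants $ {}_{-1}\bD_\bullet$ rather than the untwisted $\bD_\bullet$. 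I would make explicit the degree bookkeeping, tracking how the grading convention of \cite{LP} matches the Chevalley--Eilenberg grading of \S\ref{sect_Lie-homology}, so that the shift by $1$ comes out correctly and uniformly across the three cases.

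The main obstacle I anticipate is ensuring that the hypotheses of Theorem 5.5 of \cite{LP} genuinely apply to $J(R)$. Loday and Procesi work with ordinary associative algebras, whereas $J(R)$ is an infinite-dimensional algebra of banded matrices; one must check that their stabilization argument and the compatibility of the anti-involution with the trace/product structure go through in this setting, and in particular that the inductive system of finite $\fo_{2n+1}$, $\fsp_{2n}$, $\fo_{2n}$ stabilizes to give exactly the homology of the limit algebra over $J(R)$. A secondary subtlety is the precise normalization of the $\ast$-involution: the sign $(-1)^l$ appearing in $\tau_l$ versus the unsigned $\tau_l^s$ must be reconciled with the single clean formula $E_{k,l}(r)^\ast = E_{-l,-k}(\bar r)$, and I would need to confirm that after the Feigin--Tsygan reindexing $\Phi_I$ these signs collapse into the involution $\ast$ consistently for all three types. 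Once these compatibility checks are in place, the theorem follows as a direct translation of \cite[Theorem 5.5]{LP} through the isomorphisms of Corollary \ref{cor-stable-lim}.
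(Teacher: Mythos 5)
Your proposal is correct and takes essentially the same route as the paper: the paper proves the theorem exactly by combining Corollary \ref{cor-stable-lim} with a direct application of Theorem 5.5 of \cite{LP} to the algebra $J(R)$ equipped with the anti-involution $\ast$, supplemented only by a sketch of how the Loday--Procesi invariant-theory argument (hyperoctahedral coinvariants, single traces, dihedral stabilizer) runs in this setting. The obstacle you anticipate is not a genuine one, since the Loday--Procesi theorem applies to an arbitrary associative unital $k$-algebra with anti-involution --- the stabilization is in the matrix size $n$ over the algebra, not in the algebra itself --- so no additional verification for the infinite-dimensional $J(R)$ is needed.
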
 
In the next subsection, we explain how to apply Theorem 5.5 of \cite{LP} in our situation.

\subsection{Sketch of the proof}
We restrict ourselves to the orthogonal case, since the proof for $\fsp_{2n}$ is the same as in \cite{LP}. 

Let $I$ be at most countable (index) set and let $\cF$ be the free algebra over $\{x_i, x_i^{\ast}\}_{i \in I}$. Denote the set of monomials in $\cF$ by $\cM$. The algebra $\cF$ admits the anti-involution $\ast$ defined by $\ast(x_i)=x_i^{\ast}$ ($i \in I$). 
Let $\sim$ be the relation on $\cM$ defined by
\begin{enumerate}
\item (anti-involution) $m \sim m^{\ast}$, 
\item (cyclic equivalence) $m_1m_2 \sim m_2 m_1$.
\end{enumerate}
It can be checked that $\sim$ is an equivalence relation. For $m \in \cM$, denote its equivalence class in $\cM/\sim$ by $\Tr(m)$. Let us denote the polynomial ring in the variables in $\cM/\sim$ by $\cT$. The multiplicative group $D=(k^{\ast})^{I}$ acts on $\cF$ induced from the action on $\bigoplus_{i \in I}(kx_i \oplus kx_i^{\ast})$ given by
\[ d.\left(\sum_{i \in I} ( c_i x_i +c_i^{\ast}x_i^{\ast})\right)=\sum_{i \in I} d_i ( c_i x_i +c_i^{\ast}x_i^{\ast}), \]
where $c_i, c_i^{\ast} \in k$ and $d=(d_i)_{i \in I} \in D$. This $D$-action induces a $D$-action on $\cT$. An element $f \in \cT$ is called \emph{multilinear} if for every $d \in D$, we have $f^d=\det(d)f$. For a finite $I$, the set of multilinear elements forms a finite dimensional vector space, denoted by $\cT_{\vert I\vert}$. 

Recall that the \emph{hyperoctahedral group} $H_ n$ is the semi-direct product $(\Z/2\Z)^n\rtimes \fS_n$, where the symmetric group $\fS_n$ acts on $(\Z/2\Z)^n$ by permuting the factors. The group $H_n$ acts on $\cT$ as follows: $\fS_n$ acts by the same permutations on $x_i$'s and $x_i^{\ast}$'s,  the $i$th generator $\eta_i$ of $\Z/2\Z$ permutes $x_i$ and $x_i^{\ast}$ and  leaves the other variables unchanged. So $H_n$ is the subgroup of $\fS_{2n}$ of the group of permutations on $\{x_1, \ldots, x_n, x_1^{\ast}, \ldots, x_n^{\ast}\}$ which commutes with the operator $\ast$. 

A key step we need in invariant theory for the orthogonal group is the $H_n$-isomorphism:
for $p\geq n$, 
\[ \Ind_{H_n}^{\fS_{2n}}(\triv) \; \overset{\sim}{\longleftarrow} \; \cT_n
   \; \overset{\sim}{\longrightarrow} \; [(\fgl_p^{\otimes n})_{O_p}]^{\ast},
\]
where $\triv$ is the trivial representation of $H_n$. See, e.g., \cite{P} and \cite{W} for details.
\begin{rem} The isomorphisms in the above diagram remain valid for any choice of $O_p$ as subgroup of $GL_p$. 
\end{rem}

Now we explain the steps of the proof of Theorem \ref{thm_main0}. For simplicity, set $A=J(R),  
\tau_\odd=\tau_B$ and $\tau_{\even}=\tau_D$. \\

\noindent \textit{First step} \hspace{0.1in} 
Let us define $\fo_\sharp(A)=(\fgl(A))_{\Z/2\Z}$ where $\Z/2\Z$ acts by $\alpha \mapsto \alpha^{\tau_\sharp}$ ($\sharp \in \{\even, \odd\}$).
Hence, 
\begin{align*}
\bigwedge{}^n \fo_\sharp(A)=
&(\fo_\sharp(A)^{\otimes n})_{\fS_n}=((\fgl(A)^{\otimes n})_{(\Z/2\Z)^n})_{\fS_n} \\
=
&(\fgl(A)^{\otimes n})_{H_n}=(\fgl^{\otimes n}\otimes A^{\otimes n})_{H_n},
\end{align*}
where $\eta_i$ acts on $\alpha=\alpha_1\otimes \cdots \otimes \alpha_n \in \fgl^{\otimes n}$ by 
$\eta_i(\alpha)=\alpha_1 \otimes \cdots \otimes \alpha_i^{\tau_\sharp} \otimes \cdots \otimes \alpha_n$, and on $a=a_1\otimes \cdots \otimes a_n \in A^{\otimes n}$ 
by $\eta_i(a)=a_1 \otimes \cdots \otimes (-a_i^{\ast})\otimes \cdots \otimes \alpha_n$.
A permutation $\sigma \in \fS_n$ acts on $\fgl^{\otimes n}$ by permuting the variables, and on $A^{\otimes n}$ by permuting the variables and multiplying by $\sgn(\sigma)$. \\

\noindent \textit{Second step} \hspace{0.1in}By Proposition 6.4 of \cite{LQ},  the 
Chevalley-Eilenberg complex $(\bigwedge^{\bullet} \fo_p(A),d)$ (cf. \S \ref{sect_Lie-homology})  is quasi-isomorphic to the complex of coinvariants $((\bigwedge^{\bullet} \fo_p(A))_{O_p},d)$.
The last module can be written as
\[ (\bigwedge{}^n \fo_\sharp(A))_{O_\sharp}=((\fgl^{\otimes n}\otimes A^{\otimes n})_{H_n})_{O_\sharp}=
   ((\fgl^{\otimes n})_{O_\sharp}\otimes A^{\otimes n})_{H_n},
\]
where $O_{\sharp}$ is the ind-algebraic group associated to the ind Lie-algebra $\fo_\sharp$.
Using the above mentioned invariant theory result, we get
\[ (\bigwedge{}^n \fo_\sharp(A))_{O_\sharp} \cong (\Ind_{H_n}^{\fS_{2n}}(\triv)\otimes A^{\otimes n})_{H_n}. 
\]

\noindent \textit{Third step}
\hspace{0.1in} We regard $\fS_{2n}$ as the group of permutations on $\{1,2,\ldots, n, \\1^{\ast}, 2^{\ast}, \ldots, n^{\ast}\}$ where the subgroups $\fS_n \subset H_n$ act as simultaneous permutations on $\{1,2,\ldots, n\}$ and $\{1^{\ast}, 2^{\ast}, \ldots, n^{\ast}\}$ and $\eta_i \in (\Z/2\Z)^n$ permutes $i$ and $i^{\ast}$ and leaves the other elements unchanged. Via the isomorphism $\cT_n
   \; \overset{\sim}{\longrightarrow} \; [(\fgl_p^{\otimes n})_{O_p}]^{\ast}$, the primitive elements of $(\bigwedge^n \fo_\sharp(A))_{O_\sharp}$ correspond to linear combinations of single traces, i.e., the elements of the form $\Tr(y_{i_1}y_{i_2}\cdots y_{i_n})$ with $y_i \in \{x_i, x_i^{\ast}\}$ and $\{i_1,i_2,\ldots, i_n\}=\{1,2,\ldots, n\}$
(cf. the coproduct formula in the proof of Proposition 6.6 in \cite{LQ}). Notice that such trace element corresponds to 
\[ (\prod_{i \, \text{s.t.}\, y_i=x_i^{\ast}}\eta_i) \\(i_1,i_2,\cdots, i_n). 
\]
The $k$-span of such elements forms an $H_n$-submodule of $\Ind_{H_n}^{\fS_{2n}}(\triv)$ generated by $\kappa \otimes 1$ where $\kappa =(1,2,\ldots, n) \in \fS_{2n}$.

It turns out that $\mathrm{Stab}_{H_n}(\kappa \otimes 1)$ is isomorphic to the dihedral group $D_n$. Indeed, the stabilizer is generated by $\kappa_H=\kappa \kappa^{\ast}$ and $\eta \omega_H$ with
\begin{align*}
& \kappa^{\ast}=(1^{\ast}, 2^{\ast}, \ldots, n^{\ast}), \qquad \eta=\prod_{i=1}^n \eta_i, \\
&\omega_H=\begin{pmatrix} 1 & 2 & \cdots & n \\ n & n-1 & \cdots & 1 \end{pmatrix}\begin{pmatrix}
1^{\ast} & 2^{\ast} & \cdots & n^{\ast} \\ n^{\ast} & (n-1)^{\ast} & \cdots & 1^{\ast} \end{pmatrix}.
\end{align*}
Thus we obtain
\[ \mathrm{Prim}((\bigwedge{}^{\bullet} \fo_{\sharp} (A))_{O_{\sharp}})=(\Ind_{D_n}^{H_n}(\triv) \otimes A^{\otimes n})_{H_n}=(A^{\otimes n})_{D_n}, \]
where $\triv$ here denotes the trivial representation of $D_n$.

Set $x=\kappa_H$ and $y=\kappa_H \eta \omega_H$. By direct computation, 
\begin{align*}
&x.(a_1\otimes a_2\otimes \cdots \otimes a_n)=
(-1)^{n-1}(a_n \otimes a_1\otimes \cdots \otimes a_{n-1}), \\
&y.(a_1\otimes a_2\otimes \cdots \otimes a_n)=
(-1)^{\frac{1}{2}(n+1)(n+2)}(a_1^{\ast}\otimes a_n^{\ast} \otimes a_{n-1}^{\ast}\otimes \cdots \otimes a_2^{\ast}).
\end{align*}
With an isomorphism $\mathrm{Prim}((\bigwedge{}^{\bullet} \fo_{\sharp} (A))_{O_{\sharp}}) \cong (A^{\otimes n})_{D_n}$, 
the restriction of the differential of the complex $((\bigwedge{}^{\bullet} \fo_{\sharp} (A))_{O_{\sharp}}, d)$ to its primitive part corresponds to the Hochschild boundary operator of the dihedral complex $({}_{-1}\bD_\bullet(A),\bar{b})$. \\

In the rest of this section, we relate these skew-dihedral homologies of $J(R)$ with (skew-)dihedral homologies of $R$. For this purpose, we recall that the (skew-)dihedral homology is, by definition, obtained by a particular $\Z/2\Z$-action on the Hochschild complex (hence, the Connes bicomplex etc).
\subsection{$\Z/2\Z$-actions on Hochschild Homologies}
In \cite{FI}, we have proved that the isomorphism between the Hochschild homologies $\Phi_p: HH_p(R) \rightarrow HH_{p+1}(J(R))$ is induced from the morphism of the abelian groups $\widetilde{\Phi}_p:R^{\otimes p+1} \rightarrow J(R)^{\otimes p+2}$ defined by
\begin{align*}
& \widetilde{\Phi}_p(r_0\otimes r_1\otimes \cdots \otimes r_p) \\
=
&r_0 I\otimes \left( \sum_{l=0}^p (-1)^l r_1 I\otimes \cdots \otimes r_l I\otimes N \otimes r_{l+1}I \otimes \cdots \otimes r_p I\right),
\end{align*}
where $N=\sum_{i \in \Z} e_{i,i+1}$ (cf. see Remark \ref{rem_N}). 

We recall from \cite{L} that the $\Z/2\Z=\langle y_p^R \rangle$-action on $R^{\otimes p+1}$ is given by 
\[ y_p^R.(r_0\otimes r_1\otimes \cdots \otimes r_p)=(-1)^{\frac{1}{2}p(p+1)}(\bar{r}_0\otimes \bar{r}_p \otimes \bar{r}_{p-1}\otimes \cdots \otimes \bar{r}_1), \]
and the $\Z/2\Z=\langle y_{p+1}^{J(R)} \rangle$-action on $J(R)^{\otimes p+2}$ is given by
\[ y_{p+1}^{J(R)}.(m_0\otimes m_1\otimes \cdots \otimes m_{p+1})=(-1)^{\frac{1}{2}(p+1)(p+2)}(m_0^{\ast}\otimes m_{p+1}^{\ast}\otimes m_p^{\ast} \otimes \cdots \otimes m_1^{\ast}). \]

Let us analyze the commutativity of these $\Z/2\Z$-actions with the map $\Phi_p$
(or rather $\widetilde{\Phi}_p$). By definition, we have
\begin{align*}
&\widetilde{\Phi}_p (y_p^R.(r_0\otimes r_1\otimes \cdots \otimes r_p))
=
(-1)^{\frac{1}{2}p(p+1)}\widetilde{\Phi}_p(\bar{r}_0\otimes \bar{r}_p \otimes \bar{r}_{p-1}\otimes \cdots \otimes \bar{r}_1) \\
=
&(-1)^{\frac{1}{2}p(p+1)}
\bar{r}_0I\otimes \left(\sum_{l=0}^p(-1)^l\bar{r}_p I\otimes \cdots \bar{r}_{p+1-l}I\otimes N \otimes \bar{r}_{p-l}I\otimes \cdots \otimes \bar{r}_1I\right),
\end{align*}
and 
\begin{align*}
&y_{p+1}^{J(R)}.\widetilde{\Phi}_p(r_0\otimes r_1\otimes \cdots \otimes r_p) \\
=
&y_{p+1}^{J(R)}.\left[ r_0I \otimes \left(\sum_{l=0}^p(-1)^lr_1I\otimes \cdots r_lI \otimes N \otimes r_{l+1}I \otimes \cdots \otimes r_pI\right)\right] \\
=
&(-1)^{\frac{1}{2}(p+1)(p+2)}
\bar{r}_0I \otimes \left(\sum_{l=0}^p(-1)^l \bar{r}_pI\otimes \cdots \otimes \bar{r}_{l+1}I\otimes N^{\ast}\otimes \bar{r}_lI\otimes \cdots \otimes \bar{r}_1I\right) \\
=
&-(-1)^{\frac{1}{2}p(p+1)}
\bar{r}_0I \otimes \left(\sum_{l=0}^p(-1)^l \bar{r}_pI\otimes \cdots \otimes \bar{r}_{p-l+1}I\otimes N^{\ast}\otimes \bar{r}_{p-l}I\otimes \cdots \otimes \bar{r}_1I\right).
\end{align*}
By Theorem \ref{thm_main0} and the definition of the anti-involution $\ast$, we have $N^{\ast}= N$, from which we conclude
\begin{lemma} For $p \in \Z_{\geq 0}$, we have
 $y_{p+1}^{J(R)}\circ \Phi_p=-\Phi_p \circ y_p^R$.
\end{lemma}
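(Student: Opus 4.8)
The plan is to prove the identity at the level of chains and then pass to homology. Since $\Phi_p$ on Hochschild homology is, by the result recalled from \cite{FI}, induced by the explicit map of abelian groups $\widetilde{\Phi}_p : R^{\otimes p+1} \to J(R)^{\otimes p+2}$, and since both $y_p^R$ and $y_{p+1}^{J(R)}$ are defined directly on the chain groups and descend to homology through the dihedral formalism of \S\ref{sect_dihedral}, it suffices to establish the chain-level anticommutation
\[ y_{p+1}^{J(R)}\circ \widetilde{\Phi}_p = -\,\widetilde{\Phi}_p \circ y_p^R \]
as maps $R^{\otimes p+1} \to J(R)^{\otimes p+2}$; applying $H_\bullet$ then yields the stated relation for $\Phi_p$. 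The only structural input needed beyond the explicit formulas is that the distinguished matrices occurring in the image of $\widetilde{\Phi}_p$ are fixed by $\ast$: from $E_{i,j}(r)^{\ast}=E_{-j,-i}(\bar r)$ one gets $I^{\ast}=I$, $(rI)^{\ast}=\bar r I$, and, after reindexing, $N^{\ast}=\sum_i E_{-i-1,-i}=N$. This is exactly where the definition of $\ast$ from Theorem \ref{thm_main0} enters.

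First I would apply both compositions to a generator $r_0\otimes\cdots\otimes r_p$ and record the outputs. Feeding the reversed-and-conjugated tensor $y_p^R.(r_0\otimes\cdots\otimes r_p)$ into $\widetilde{\Phi}_p$ gives $\bar r_0 I$ in slot $0$, followed by the reversed list $\bar r_p I,\dots,\bar r_1 I$ with a single $N$ inserted at one of the positions summed over, all carrying the dihedral sign $(-1)^{\frac12 p(p+1)}$. On the other side, applying $y_{p+1}^{J(R)}$ to $\widetilde{\Phi}_p(r_0\otimes\cdots\otimes r_p)$ fixes slot $0$, conjugates every slot by $\ast$, reverses slots $1,\dots,p+1$, and carries the sign $(-1)^{\frac12 (p+1)(p+2)}$; using $N^{\ast}=N$ and $(rI)^{\ast}=\bar r I$, the resulting tensors have \emph{exactly the same shape}, namely $\bar r_0 I$ followed by the reversed list with a single $N$ inserted. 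Thus after this step both compositions agree up to scalar signs and a difference in the index convention recording where $N$ sits.

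The whole content is therefore a sign comparison, which I expect to be the only real obstacle. The two sums differ by the reindexing $l\mapsto p-l$ needed to align the two conventions for the position of $N$; this contributes a factor $(-1)^p$, since $(-1)^{p-l}=(-1)^p(-1)^l$. Collecting the three sign sources, the two dihedral prefactors and this reindexing factor, the residual constant is $(-1)^{\frac12(p+1)(p+2)-\frac12 p(p+1)+p}$, and the parity identity $\tfrac12(p+1)(p+2)-\tfrac12 p(p+1)+p=2p+1$ shows it equals $-1$. The care required is purely in not dropping a Koszul sign during the reversal and in correctly matching the two parametrizations of the insertion point of $N$; once the chain-level identity holds, functoriality of $H_\bullet$ delivers $y_{p+1}^{J(R)}\circ\Phi_p=-\Phi_p\circ y_p^R$.
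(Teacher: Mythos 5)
Your proposal is correct and follows essentially the same route as the paper: both establish the chain-level identity $y_{p+1}^{J(R)}\circ\widetilde{\Phi}_p=-\widetilde{\Phi}_p\circ y_p^R$ by applying each composition to a generator, invoking $N^{\ast}=N$ (with $(rI)^{\ast}=\bar r I$), and then matching the two sums via the reindexing $l\mapsto p-l$, whose factor $(-1)^p$ combines with the two dihedral prefactors to give the residual sign $(-1)^{2p+1}=-1$. Your sign bookkeeping agrees with the paper's computation, so there is nothing to add.
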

In particular, by Remark \ref{rem_cyclic-dihedral}, this implies ${}_{-1}HD_{\bullet-1}(J(R))=HD_{\bullet-2}(R)$ (cf. \S \ref{sect_dihedral}).

\subsection{Main result}
We obtain
\begin{thm}\label{thm_main} For $\fg=\fo_J^{\odd}, \, \fsp_J$ and $\fo_J^{\even}$, we have
\[
\Prim(H_\bullet(\fg(R)))
= 
HD_{\bullet-2}(R).
\]
\end{thm}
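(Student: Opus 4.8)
The plan is to read off the theorem as the composition of two isomorphisms that are already in place. On the one hand, Theorem~\ref{thm_main0} supplies, uniformly for $\fg = \fo_J^{\odd}, \fsp_J$ and $\fo_J^{\even}$, the identification
\[ \Prim(H_\bullet(\fg(R))) = {}_{-1}HD_{\bullet-1}(J(R)). \]
On the other hand, the delooping isomorphism of \cite{FI} together with the foregoing Lemma yields ${}_{-1}HD_{\bullet-1}(J(R)) \cong HD_{\bullet-2}(R)$. First I would therefore isolate this latter identification as the only thing still needing justification, since splicing it onto Theorem~\ref{thm_main0} immediately gives the claim.

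To produce the identification ${}_{-1}HD_{\bullet-1}(J(R)) \cong HD_{\bullet-2}(R)$ I would argue as follows. By Remark~\ref{rem_cyclic-dihedral} --- legitimate because $\frac{1}{2} \in k$ --- I first rewrite ${}_{-1}HD_{\bullet-1}(J(R)) = HC_{\bullet-1}^-(J(R))$ and $HD_{\bullet-2}(R) = HC_{\bullet-2}^+(R)$, so that the target becomes the assertion $HC_{\bullet-1}^-(J(R)) \cong HC_{\bullet-2}^+(R)$. The delooping map $\Phi_p \colon HH_p(R) \xrightarrow{\sim} HH_{p+1}(J(R))$ of \cite{FI}, induced by $\widetilde{\Phi}_p$, raises Hochschild degree by exactly one, which is the source of the shift between $\bullet-1$ and $\bullet-2$. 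By the Lemma above it intertwines the two $y$-actions up to the sign $-1$; consequently it sends the $(+1)$-eigenspace of $y^R$ isomorphically onto the $(-1)$-eigenspace of $y^{J(R)}$, and conversely. Granting that this eigenspace exchange persists after passage to cyclic homology, one obtains $HC_{p+1}^-(J(R)) \cong HC_p^+(R)$, i.e. ${}_{-1}HD_{p+1}(J(R)) \cong HD_p(R)$; setting $p = \bullet-2$ finishes the reduction.

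The single delicate point --- the one I expect to be the main obstacle --- is the promotion of the intertwining relation from Hochschild to cyclic homology, since the Lemma above is recorded only for the Hochschild complexes whereas the eigenspace decomposition of Remark~\ref{rem_cyclic-dihedral} lives on Connes' $(b,B)$-bicomplex. To close this gap I would verify that the delooping chain map $\widetilde{\Phi}$ is compatible with Connes' operator $B$, and not merely with the Hochschild differential $b$, so that the degree-one isomorphism of \cite{FI} extends from $HH_\bullet$ to $HC_\bullet$; and that the involution $y$ commutes with $B$ on both sides, up to the sign already recorded at the Hochschild level. The computation $N^\ast = N$ used above is precisely what makes the signs match. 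Once $\Phi$ is known to transport the whole cyclic structure while intertwining the $y$-actions with sign $-1$, the degree shift by one and the exchange of $\pm1$-eigenspaces carry over verbatim from Hochschild to cyclic homology, and no calculation beyond this compatibility check remains.
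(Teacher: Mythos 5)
Your proposal follows the paper's own proof: it splices Theorem~\ref{thm_main0} together with the intertwining lemma $y_{p+1}^{J(R)}\circ \Phi_p=-\Phi_p\circ y_p^R$ for the delooping isomorphism of \cite{FI}, and then invokes Remark~\ref{rem_cyclic-dihedral} (valid since $\tfrac{1}{2}\in k$) to exchange the $\pm 1$-eigenspaces and obtain ${}_{-1}HD_{\bullet-1}(J(R))=HD_{\bullet-2}(R)$, exactly as the paper does. If anything, you are more careful than the paper: the promotion of the sign-intertwining from the Hochschild level to cyclic homology, which you isolate as the delicate point requiring compatibility of $\widetilde{\Phi}$ with Connes' operator $B$, is passed over in the paper with the single phrase ``In particular, by Remark~\ref{rem_cyclic-dihedral}, this implies\ldots''.
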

In particular, it is known that for $R=k$, $HD_i(k)=k$ for $i \equiv 0 \mod 4$,  and $HD_i(k)=0$ otherwise $($cf. Example 1.9 in \cite{L}$)$. Hence, 
\begin{cor} For $\fg=\fo_J^{\odd}, \, \fsp_J$ and $\fo_J^{\even}$, $\Prim(H_\bullet(\fg(k)))$ is the graded $k$-vector space whose $i$th graded component $\Prim(H_\bullet(\fg(k)))_i$ is 
\[\Prim(H_\bullet(\fg(k)))_i =\begin{cases}\; k \; & i \equiv 2 \, (4), \\
                                                                  \; 0 \; & \text{otherwise}. \end{cases}
\]
\end{cor}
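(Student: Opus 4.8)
The plan is to obtain the corollary directly from Theorem~\ref{thm_main} by specializing the coefficient algebra to $R=k$, and then to read off the answer from the known dihedral homology of the ground field after accounting for the degree shift by $2$. First I would set $R=k$ in the main theorem, which gives the graded isomorphism
\[ \Prim(H_i(\fg(k))) \cong HD_{i-2}(k), \qquad \fg=\fo_J^{\odd}, \fsp_J, \fo_J^{\even}, \]
so that the whole question reduces to computing $HD_\bullet(k)$ and shifting indices.

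The second step is to record the dihedral homology of $k$. The quickest route is to invoke Example~1.9 of \cite{L}, which states $HD_n(k)=k$ for $n\equiv 0 \pmod 4$ and $HD_n(k)=0$ otherwise. Should one prefer to reprove it here, I would use Remark~\ref{rem_cyclic-dihedral} to identify $HD_n(k)=HC_n^+(k)$, the $(+1)$-eigenspace of the dihedral involution $y$ on the cyclic homology $HC_n(k)$. Since $HC_{2i}(k)=k$ and $HC_{2i+1}(k)=0$, and since $y$ acts on the generator of $HC_{2i}(k)$ by the sign $(-1)^i$, the invariant part is one-dimensional exactly when $i$ is even, i.e. when $n=2i\equiv 0\pmod 4$, and vanishes in all other degrees. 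This recovers the stated values.

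The last step is purely bookkeeping: combining the isomorphism of the first step with the values of the second, $\Prim(H_i(\fg(k)))$ is nonzero precisely when $i-2\equiv 0 \pmod 4$, that is when $i\equiv 2 \pmod 4$, and in those degrees it is one-dimensional over $k$. This is exactly the claimed description of the graded space $\Prim(H_\bullet(\fg(k)))$, and it holds uniformly for all three types $\fo_J^{\odd}, \fsp_J, \fo_J^{\even}$ since Theorem~\ref{thm_main} is stated uniformly.

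There is no serious obstacle here, as the corollary is essentially a specialization of Theorem~\ref{thm_main}. The one point requiring genuine care is the sign $(-1)^i$ of the dihedral involution on the generator of $HC_{2i}(k)$: it is exactly this sign that converts the period-$2$ pattern of cyclic homology into the period-$4$ pattern of dihedral homology, and hence the period-$4$ pattern of the corollary. If one takes Example~1.9 of \cite{L} as given this is immediate; otherwise it requires tracking the $y$-action through the periodicity operator of cyclic homology, which is routine but sign-sensitive.
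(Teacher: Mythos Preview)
Your proposal is correct and follows exactly the paper's own argument: specialize Theorem~\ref{thm_main} to $R=k$ and invoke Example~1.9 of \cite{L} for the values of $HD_\bullet(k)$, then shift the index by $2$. The additional reproof of $HD_\bullet(k)$ via Remark~\ref{rem_cyclic-dihedral} is a harmless elaboration beyond what the paper provides.
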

\begin{rem} These numbers are exactly the double of the ``exponents of the Weyl groups'' $($cf. \cite{B}$)$ of type $B_J, C_J$ and $D_J$, respectively. A similar phenomenon happens also for $\fg J(k)$, see Theorem 6.6 in \cite{FI}. 
\end{rem}

\subsection{Universal Central Extension}\label{sect_UCE}

Theorem \ref{thm_main} implies
\begin{cor} For $\fg=\fo_J^{\odd}, \, \fsp_J$ and $\fo_J^{\even}$, we have
\[ H_2(\fg (R))=HD_0(R)=(R^{ab})_1, \]
where $(R^{ab})_1$ is the fixed point part of the anti-involution $\overline{\cdot}$ defined on $R^{ab}=R/[R,R]$, that is induced from the anti-involution $\overline{\cdot}$ on $R$. 
\end{cor}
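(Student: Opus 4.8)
The plan is to deduce this corollary from Theorem \ref{thm_main} together with the Hopf-algebra structure of $H_\bullet(\fg(R))$, and then to identify $HD_0(R)$ by unwinding the definition of dihedral homology in degree $0$.

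First I would invoke the Quillen version of the Milnor--Moore theorem recalled in \S\ref{sect_Lie-homology}: the homology $H_\bullet(\fg(R))$ is the graded symmetric algebra over its primitive part $\Prim(H_\bullet(\fg(R)))$. Thus in degree $2$ the only possible contributions to $H_2(\fg(R))$ are the primitive elements $\Prim(H_\bullet(\fg(R)))_2$ together with the decomposable elements built from primitives of strictly lower positive degree. Any such decomposable lives in degree $1+1$ and hence involves $\Prim(H_\bullet(\fg(R)))_1$. By Theorem \ref{thm_main} this equals $HD_{-1}(R)$, which vanishes since dihedral homology is concentrated in nonnegative degrees. Consequently there are no decomposables in degree $2$, and
\[ H_2(\fg(R)) = \Prim(H_\bullet(\fg(R)))_2 = HD_0(R). \]

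Second, I would compute $HD_0(R)$ explicitly. Using Remark \ref{rem_cyclic-dihedral}, one has $HD_0(R) = HC_0^+(R)$, the $(+1)$-eigenspace of the involution $y$ acting on $HC_0(R)$. Now $HC_0(R) = R/[R,R] = R^{ab}$, and in degree $0$ the sign $(-1)^{\frac{1}{2}\cdot 0\cdot 1}=1$ shows that the $y$-action is induced simply by $r \mapsto \bar r$; hence its fixed-point space is exactly $(R^{ab})_1 = \{ r \in R^{ab} \mid \bar r = r\}$. Alternatively one reads this off from the complex $(\bD_\bullet(R),\bar b)$ directly: since $\bD_0(R)=(R)_{D_1}$ with $D_1 \cong \Z/2\Z = \langle y\rangle$ acting by $\overline{\cdot}$, passing to coinvariants of $\bar b$ yields $(R^{ab})_1$. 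Combining the two identifications gives $H_2(\fg(R)) = HD_0(R) = (R^{ab})_1$, as claimed.

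The main obstacle I anticipate is not analytic but bookkeeping: one must be sure that degree-$1$ primitives are genuinely the only potential source of decomposables in degree $2$ (so that the vanishing $HD_{-1}(R)=0$ suffices), and one must track the grading sign in the $y$-action to confirm it acts by $\overline{\cdot}$ without an extra minus sign in degree $0$. Once these points are settled, the remaining steps are a routine unwinding of the definitions of cyclic and dihedral homology in the lowest degree.
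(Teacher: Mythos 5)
Your proposal is correct and matches the paper's (implicit) argument: the paper derives this corollary directly from Theorem \ref{thm_main}, exactly as you do, with $H_2 = \Prim(H_\bullet(\fg(R)))_2 = HD_0(R)$ because the degree-one primitive part $HD_{-1}(R)$ vanishes, and $HD_0(R) = (R^{ab})_1$ by unwinding the degree-zero dihedral complex. Your bookkeeping of the sign $(-1)^{\frac{1}{2}n(n+1)}$ at $n=0$ and the identification $HC_0(R)=R^{ab}$ are both accurate, so there is no gap.
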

Thus, the kernel of the universal central extension $\widetilde{\fg}(R)$ of $\fg(R)$ is $(R^{ab})_1$, whose Lie bracket is given by the restriction of those of $\widetilde{\fg J}(R)$ described in \S 6.4 of \cite{FI} as follows.

Set $I_+=\sum_{i\geq 0} e_{i,i}$ and $I_-=\sum_{i<0} e_{i,i}$. It is clear that $I_{\pm} \in J(R)$ and $I_\sigma I_\tau=\delta_{\sigma, \tau} I_\tau$ for $\sigma, \tau \in \{\pm\}$. Let $\Phi:J(R) \rightarrow J(R)$ be the $k$-linear map defined by $\Phi(X)=I_+XI_+$.  A matrix $M=(m_{i,j}) \in J(R)$ is said to be of finite support, if the set $\{(i,j)\, \vert m_{i,j} \neq 0\}$ is finite. We denote the Lie subalgebra of $\fg J(R)$ consisting of the matrices of finite support by $\fg F(R)$.  Define the trace map $\Tr: \fg F(R) \rightarrow R^{ab}$ as the composition of the usual trace map $\mathrm{tr}: \fg F(R) \rightarrow R; M=(m_{i,j})\, \mapsto \sum_i m_{i,i}$ and the abelianization $\pi^{ab}: R \twoheadrightarrow R^{ab}$. 

Let $\Psi: J(R) \times J(R) \rightarrow R^{ab}$ be the $k$-bilinear map defined by
\begin{align*}
\Psi(X,Y)=
&\Tr([\Phi(X), \Phi(Y)]-\Phi([X,Y])) \\
=
&\Tr((I_+YI_-)(I_-XI_+)-(I_+XI_-)(I_-YI_+)).
\end{align*}
It can be checked that this is a $2$-cocycle.
\begin{thm} For $\fg=\fo_J^{\odd}, \, \fsp_J$ and $\fo_J^{\even}$,
the Lie algebra $\widetilde{\fg}(R)$ is a $k$-vector space
$\widetilde{\fg}(R)=\fg(R) \oplus (R^{ab})_1$
equipped with the Lie bracket $[ \cdot, \cdot ]'$ defined by 
\[
[\widetilde{\fg}(R), (R^{ab})_1]'=0, \qquad
[X,Y]'=[X,Y]+\Psi(X,Y) \quad X,Y \in \fg(R).
\]
\end{thm}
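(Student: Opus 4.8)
The plan is to identify the universal central extension $\widetilde{\fg}(R)$ by combining the abstract computation $H_2(\fg(R))=(R^{ab})_1$ from the preceding Corollary with the explicit cocycle $\Psi$ inherited from the type $A_J$ case treated in \S 6.4 of \cite{FI}. Recall the general principle (cf. \cite{Q}, \cite{MM}): a perfect Lie algebra $\fg(R)$ admits a universal central extension whose kernel is canonically $H_2(\fg(R))$, and the extension itself is realized by any $2$-cocycle representing a basis of $H^2(\fg(R); k)$ dual to $H_2$. Since the Corollary gives $H_2(\fg(R))=(R^{ab})_1$, the task reduces to exhibiting an explicit $2$-cocycle with values in $(R^{ab})_1$ that represents the nontrivial class, and then checking that the resulting central extension is indeed universal (equivalently, that $\widetilde{\fg}(R)$ is itself perfect and centrally closed).

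First I would verify that $\Psi$, as defined above, is a well-defined $k$-bilinear map $\fg(R)\times \fg(R)\to (R^{ab})_1$ and a genuine $2$-cocycle. Well-definedness requires that the trace $\Tr([\Phi(X),\Phi(Y)]-\Phi([X,Y]))$ makes sense: although $\Phi(X)=I_+XI_+$ need not have finite support, the combination $(I_+YI_-)(I_-XI_+)-(I_+XI_-)(I_-YI_+)$ lands in $\fg F(R)$ because the band condition on $J(R)$ forces the off-diagonal corners $I_+XI_-$ and $I_-XI_+$ to be supported in a finite band straddling the origin, so their products are finite-support matrices and $\Tr$ applies. The cocycle identity $\Psi([X,Y],Z)+\Psi([Y,Z],X)+\Psi([Z,X],Y)=0$ I would obtain by restricting the corresponding verification from $\fg J(R)$ in \cite{FI}: since $\fg(R)$ sits inside $\fg J(R)$ as the $(-1)$-eigenspace of $\tau_B$, $\tau_C$ or $\tau_D$, and $\Phi$, $\Tr$ are built from the ambient data, the cocycle property descends by restriction. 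The key point to pin down is that the values actually lie in the $+1$-eigenspace $(R^{ab})_1$ of the induced anti-involution on $R^{ab}$, which follows from the compatibility of $\ast$ (equivalently the $\tau_\sharp$) with $\Tr$ and the sign conventions.

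Next I would construct $\widetilde{\fg}(R)=\fg(R)\oplus (R^{ab})_1$ with the stated bracket $[\cdot,\cdot]'$ and confirm it is a central extension: the central ideal $(R^{ab})_1$ is exactly $[\widetilde{\fg}(R),(R^{ab})_1]'=0$ by construction, and the Jacobi identity for $[\cdot,\cdot]'$ is equivalent to the cocycle identity just verified. To conclude that this extension is \emph{universal}, I would check the two standard criteria: (i) $\fg(R)$ is perfect, so that a universal central extension exists, and (ii) the extension class of $\Psi$ generates $H^2(\fg(R);k)\cong \Hom((R^{ab})_1, k)$, which is guaranteed precisely by the identification of $H_2(\fg(R))$ with $(R^{ab})_1$ in the Corollary provided the class of $\Psi$ is nonzero and surjects onto the kernel. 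The cleanest route is to match $\Psi$ with the explicit generator of $H_2$ produced by the Loday--Procesi/dihedral computation, i.e.\ to trace through the isomorphism $H_2(\fg(R))=HD_0(R)=(R^{ab})_1$ of Theorem \ref{thm_main} and check that the image of the fundamental cycle corresponds under the Chevalley--Eilenberg pairing to $\Psi$.

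The main obstacle I expect is precisely this last matching: verifying that the concrete corner-trace cocycle $\Psi$ represents the \emph{same} class that the abstract dihedral-homology computation produces, rather than some multiple or a coboundary. This amounts to carefully carrying the generator of $HD_0(R)$ back through the chain of identifications (the isomorphism $H_\bullet(\fg(R))\cong H_\bullet(\fo_\sharp(J(R)))$ of Corollary \ref{cor-stable-lim}, the Loday--Procesi reduction to skew-dihedral homology of $J(R)$, and finally the Lemma identifying ${}_{-1}HD_{\bullet-1}(J(R))$ with $HD_{\bullet-2}(R)$) and comparing it against the explicit Chevalley--Eilenberg $2$-chain that $\Psi$ evaluates on. The restriction from $\fg J(R)$ helps here, since the analogous identification is already established in \S 6.4 of \cite{FI} for type $A_J$; the work is to confirm that passing to the $\tau_\sharp$-fixed subalgebra does not alter the generator, which should follow from the functoriality of the whole construction under the inclusion $\fg(R)\hookrightarrow \fg J(R)$.
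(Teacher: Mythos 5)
Your proposal follows essentially the same route as the paper: identify the kernel of the universal central extension with $(R^{ab})_1$ via the Corollary $H_2(\fg(R))=HD_0(R)=(R^{ab})_1$, and realize the bracket by restricting the explicit cocycle $\Psi$ of $\widetilde{\fg J}(R)$ from \S 6.4 of \cite{FI}. The paper is in fact terser than you are---it merely asserts that $\Psi$ is a $2$-cocycle and that the restriction realizes the universal central extension---so your added attention to well-definedness of the corner traces, the $(R^{ab})_1$-valuedness, perfectness, and the matching of the class of $\Psi$ with the generator of $H_2(\fg(R))$ only makes explicit the verifications the paper leaves implicit.
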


\newpage


\appendix
\section{Free field realization of $\fg J(\C)$}
In this Appendix, we explain the historical origin of the so-called Japanese cocycle (cf. \cite{JM}). 
\subsection{Clifford algebras}
Let $\cA$ be the Clifford algebra over $\C$ generated by $\psi_i, \psi_i^{\ast} \; (i \in \Z)$ and $1$,  subject to the relations
\[ [\psi_i, \psi_j]_+=0, \qquad [\psi_i, \psi_j^{\ast}]_+=\delta_{i,j}1, \qquad
[\psi_i^{\ast},\psi_j^{\ast}]_+=0. \]
An element of $\cW=(\bigoplus_{i \in \Z}\C \psi_i)\oplus (\bigoplus_{i \in \Z}\C \psi_i^{\ast})$ is referred to as a {\sl free fermion}.  Set
\[ \cW_{\an}:=\left(\bigoplus_{i<0} \C\psi_i\right)\oplus \left(\bigoplus_{i\geq 0}\C \psi_i^{\ast}\right), \qquad
   \cW_{\cre}:=\left(\bigoplus_{i\geq 0} \C\psi_i\right)\oplus \left(\bigoplus_{i< 0}\C \psi_i^{\ast}\right),
\]
($\an$  stands for annihilation and $\cre$ stands for creation) $\cF:=\cA/\cA\cW_{\an}$ and $\cF^{\ast}:=\cW_{\cre}\cA \slash \cA$. It is clear that $\cF$ has a left $\cA$-module structure and $\cF^{\ast}$ has a right $\cA$-module structure. 
We denote the image of $1 \in \cA$ in $\cF$ and $\cF^{\ast}$ by $\vacr$ and $\vacl$ respectively. $\cF$ and $\cF^{\ast}$ are the $\cA$-modules generated by 
$\vacr$ and $\vacl$ with the defining relations:
\[ \cW_{\an}.\vacr=0, \qquad \vacl.\cW_{\cre}=0. \]

The image of  $\fgl(\C) \hookrightarrow \End_\C(\cF)$ (and $\fgl(\C)\hookrightarrow \End_\C(\cF^{\ast})$) can be described as follows:
\[ \{ \,\sum_{\text{finite}}a_{i,j}\psi_i\psi_j^{\ast}\, \}.
\]

\subsection{Japanese cocycle}

The Lie algebra $\fg J(\C)$ does not act on $\cF$ and $\cF^{\ast}$.  Indeed, it is the universal central extension $\widetilde{\fg J}(\C)$ of $\fg J(\C)$ (cf. \cite{FI}) that can be viewed as a subalgebra of $\End_\C(\cF)$ (resp. $\End_\C(\cF^{\ast})$). To describe this explicitly, let us introduce the so-called \emph{normal ordered product} $\nod \cdot \nod$ as follows (for detail, see \cite{JM}).

By PBW theorem for $\cA$, it follows that $\cA=(\cW_{\cre}\cA+\cA\cW_{\an})\oplus \C$. We denote the canonical projection $\cA \twoheadrightarrow \C$ with respect to this decomposition by $\pi$. By definition, the $\C$-bilinear map $\langle \cdot \rangle: \cF^{\ast}\times \cF \longrightarrow \C; (\vacl.a, b.\vacr) \longmapsto \pi(ab)$ is well-defined for any $a,b \in \cA$. Explicitly, some of them are given by
\begin{align*}
& \langle \psi_i\psi_j\rangle:=0, \qquad \langle \psi_i^{\ast}\psi_j^{\ast}\rangle:=0, \\
& \langle \psi_i\psi_j^{\ast}\rangle:=
    \begin{cases} \; \delta_{i,j} \; &i=j<0, \\
                           \; 0 \; & \text{otherwise} \end{cases} \qquad
    \langle \psi_j^{\ast} \psi_i\rangle:=
    \begin{cases} \; \delta_{i,j}\; &i=j\geq 0, \\
                          \; 0 \; & \text{otherwise} \end{cases}.
\end{align*}
We set $\nod \psi_i \psi_j^{\ast} \nod := \psi_i\psi_j^{\ast} -\langle \psi_i\psi_j^{\ast}\rangle$. The subset of $\End_\C(\cF)$ (resp. $\End_\C(\cF^{\ast})$) defined by 
\[ \{ \, \sum_{i,j} a_{i,j} \nod \psi_i\psi_j^{\ast}\nod \,
 \vert \, \exists\,\, N \,\, \text{s.t.} \,\, a_{i,j}=0 \quad (\text{$\forall\, i,j$} \,\, \text{s.t.} \,\, \vert i-j\vert>N)\, \} \oplus \C \cdot 1
\]
is closed under the Lie bracket. Explicitly, this is given by 
\begin{align*}
& [\sum_{i,j} a_{i,j}\nod \psi_i\psi_j^{\ast}\nod, \sum_{k,l} b_{k,l} \nod \psi_k\psi_l^{\ast}\nod]\\
=
&\sum_{i,j}\left(\sum_k(a_{i,k}b_{k,j}-b_{i,k}a_{k,j})\right)\nod \psi_i\psi_j^{\ast}\nod+\left(\sum_{i<0,j\geq 0}a_{i,j}b_{j,i}-\sum_{i\geq 0, j<0}a_{i,j}b_{j,i}\right) 1. 
\end{align*}
This second term is the so-called ``\textbf{Japanese cocycle}''.

\section{Lie Algebras $\fo_J^{\odd}(\C), \fsp_J(\C)$ and $\fo_J^{\even}(\C)$}
\label{sect_BCD-JM}
In this section, we briefly recall how the orthogonal and symplectic 
subalgebras of $\fg J(\C)$ were introduced. 
\subsection{Some involutions on $J(\C)$}\label{sect_involution-I}
After recalling the involutions $\sigma_l$ of $\cA$ given in \cite{JM},
we compute the induced (anti-)involutions on $J(\C)$. \\

For $l \in \Z$, define the involution $\sigma_l$ of $\cA$ by 
\[ \sigma_l(\psi_n)=(-1)^{l-n}\psi_{l-n}^{\ast}, \qquad 
   \sigma_l(\psi_n^{\ast})=(-1)^{l-n}\psi_n. 
\]
By definition, one has 
\[ \sigma_l(\psi_i\psi_j^{\ast})=(-1)^{i+j}\psi_{l-i}^{\ast}\psi_{l-j}
=(-1)^{i+j}(-\psi_{l-j}\psi_{l-i}^{\ast}+\delta_{i,j}1), \]
i.e., the involution $\sigma_l$ induces an involution $\widetilde{\sigma}_l$ of $\fg J(R)$ given by $\widetilde{\sigma}_l(E_{i,j})=-(-1)^{i+j}E_{l-j,l-i}$. Indeed, it should be regarded as $(-1)$ times the anti-involution $\tau_l$ on $J(R)$ defined by
$\tau_l(E_{r,s})=(-1)^{r+s}E_{l-s,l-r}$. Remark, that by setting $J_l=\sum_{i \in \Z}(-1)^{i}E_{i,l-i}$,  we get $(J_l)^2=(-1)^l I$ with $I=\sum_{i \in \Z}E_{i,i}$. An element $X \in J(R)$ is $\langle -{\tau}_l\rangle$-invariant iff
\[ {}^tX+(-1)^{l}J_lXJ_l={}^tX+J_lXJ_l^{-1}=0,  \]
since $J_l E_{r,s}J_l=(-1)^{r+s+l}E_{l-r,l-s}$  $($cf. see Remark \ref{rem_matrix-J}$)$. 

Now, for an associative unital $\C$-algebra $R$ with an anti-involution $\overline{\cdot}:R \rightarrow R$, we extend the definition of the \textbf{transpose}, also denoted by ${}^t(\cdot )$, to $J(R)$ by ${}^t(rE_{i,j})=\overline{r}E_{j,i}$. Set
\[ \tau_l(X)=(-1)^lJ_l{}^t(X)J_l \qquad X \in J(R). \]
This is a well-defined anti-involution on $J(R)$ that induces an anti-involution on $\fg J(R)$. 

\begin{rem}\label{rem_N} Set $N=\sum_{i \in \Z} e_{i,i+1}$. 
\begin{enumerate}
\item ${}^tNN=I$, i.e., $N^{-1}={}^tN$, 
\item $N^{-1}\tau_l(X)N=\tau_{l+2}(\Ad(N)(X))$ for $l \in \Z$ and $X \in J(R)$ \\
since $N^{-1}J_lN=-J_{l+2}$.
\end{enumerate}
Hence, it is sufficient to consider $\tau_0$ and $\tau_{-1}$, for example.
\end{rem}

\subsection{Fermions with $2$ components}\label{sect_involution-II}
We may think of the fermions $\psi_n^{(j)}, \psi_n^{(j) \ast}$ indexed by $n \in \Z$ and $j \in \{0,1\}$ satisfying
\[ [\psi_m^{(j)}, \psi_n^{(k)}]_+=0, \qquad [\psi_m^{(j)},\psi_n^{(k)\ast}]_+=\delta_{j,k}\delta_{m,n}1, \qquad [\psi_m^{(j)\ast}, \psi_n^{(k)\ast}]_+=0.
\]
Fixing a bijection $\Z \times \{0,1\} \rightarrow \Z$, the Clifford algebra generated by fermions with $2$ components is isomorphic to $\cA$. For example, one of the simplest choices is
\[ \psi_m^{(0)}=\psi_{2m}, \quad \psi_m^{(1)}=\psi_{2m+1}, \qquad
   \psi_m^{(0)\ast}=\psi_{2m}^{\ast}, \quad \psi_m^{(1)\ast}=\psi_{2m+1}^{\ast}. \]
Here and after, we fix such a renumeration, if necessary.

Let $\sigma$ be the involution of these fermions with $2$ components, defined by
\[ \sigma(\psi_n^{(j)})=(-1)^n\psi_{-n}^{(j)\ast}, \qquad \sigma(\psi_n^{(j)\ast})=(-1)^n \psi_{-n}^{(j)}. \]
By definition, one has
\[ \sigma(\psi_m^{(j)}\psi_n^{(k)\ast})=(-1)^{m+n}\psi_{-m}^{(j)\ast}\psi_{-n}^{(k)}
=(-1)^{m+n}(-\psi_{-n}^{(k)}\psi_{-m}^{(j)\ast}+\delta_{j,k}\delta_{m,n}1).
\]
Enumerating matrix elements of $J(k)$ as $E_{m,n}^{(j),(k)}$, the involution $\sigma$ induces the involution $\widetilde{\sigma}$ of $\fg J(\C)$ given by
$\widetilde{\sigma}(E_{m,n}^{(j),(k)})=-(-1)^{m+n}E_{-n,-m}^{(k),(j)}$. Indeed, it should be regarded as $(-1)$ times the anti-involution $\tau$ on $J(k)$ defined by
$\tau(E_{m,n}^{(j),(k)})=(-1)^{m+n}E_{-n,-m}^{(k),(j)}$. Set 
\[J=\sum_{j=0}^1\sum_{m \in \Z}(-1)^mE_{m,-m}^{(j),(j)}. \]
It is clear that $J^2=I=\sum_{j=0}^1\sum_{m \in \Z}E_{m,m}^{(j),(j)}$ and $X \in J(\C)$ is $\langle -\tau \rangle$-invariant iff
\[ {}^tX+JXJ=0, \]
since $JE_{m,n}^{(j),(k)}J=(-1)^{m+n}E_{-m,-n}^{(j),(k)}$ $($cf. see Remark \ref{rem_matrix-J}$)$.

Now, on $J(R)$ (hence on $\fg J(R)$), the anti-involution $\tau$ is defined by
\[ \tau(X)=J{}^t(X)J, \]
as in the previous subsection. 
\subsection{Central extension of $\fo_J^{\odd}(\C), \fsp_J(\C)$ and $\fo_J^{\even}(\C)$}
Recall that all of these Lie algebras are defined as the fix point subalgebras with respect to certain involutions. So their central extensions may also be realized as the fix point subalgebras with respect to certain involutions (cf. \cite{JM}). Explicitly, Lie algebras over $\C$ of type $B_J$, $C_J$ and $D_J$ are defined by $\widetilde{\fg J}(\C)^{\tau_0,-}, \widetilde{\fg J}(\C)^{\tau_{-1},-}$ and $\widetilde{\fg J}(\C)^{\tau,-}$, respectively. 

Our main theorem assures that they are, in fact,  the universal central extensions of the Lie algebras $\fo_J^{\odd}(\C), \fsp_J(\C)$ and $\fo_J^{\even}(\C)$, respectively.

\end{document}